\documentclass{article}
\usepackage{amsmath,graphicx,url}
\usepackage{amsthm}
\usepackage[textwidth=16cm]{geometry}

\usepackage[utf8]{inputenc}
\usepackage[draft,inline,nomargin]{fixme}

\usepackage{caption}
\usepackage{subcaption}
\usepackage{tikz}
\usepackage{pgfplots}
\graphicspath{{./figures/}}

\newcommand{\RR}{\mathbf{R}}
\newcommand{\norm}[1]{\|#1\|}
\newcommand{\abs}[1]{|#1|}
\DeclareMathOperator{\sign}{sign}
\DeclareMathOperator*{\argmin}{argmin}

\newtheorem{thm}{Theorem}[section]

\theoremstyle{definition}

\newtheorem{rem}{Remark}

\title{A sparse Kaczmarz solver and a linearized Bregman method for online compressed sensing}
%
\author{Dirk Lorenz\and Stephan Wenger \and Frank Sch\"opfer \and Marcus Magnor}
%
%
%
\begin{document}
%
\maketitle
\begin{abstract}
  An algorithmic framework to compute sparse or minimal-TV solutions
  of linear systems is proposed. The framework includes both the
  Kaczmarz method and the linearized Bregman method as special cases
  and also several new methods such as a sparse Kaczmarz solver.

  The algorithmic framework has a variety of applications and is
  especially useful for problems in which the linear measurements are
  slow and expensive to obtain. We present examples for online
  compressed sensing, TV tomographic reconstruction and radio
  interferometry.
\end{abstract}
\noindent\textbf{Keywords:}
  Sparse solutions, compressed sensing, Kaczmarz method, linearized
  Bregman method, radio interferometry

\section{Introduction}
\label{sec:intro}

Sparse solutions of linear systems play a vital role in several active
fields and form the backbone of the theory known as compressed
sensing~\cite{candes2006compressive,FR13}. A prominent approach to compute such
sparse solutions in the underdetermined case is to compute solutions
with minimal $\ell^1$-norm and this is known as Basis
Pursuit~\cite{chen1998basispursuit}. There exists a large body of literature to compute
solutions to this problem and we only refer to the recent
review~\cite{lorenz2011ISAL1}. In this work we propose a framework that addresses two fundamental problems: 1) The problem may be too large to fit into the
computer's memory and hence, only parts of the whole problem can be
processed at once. 2) The measurement process may be very slow, i.e.~it
takes considerable time to obtain a new row of the measurement matrix
and the corresponding entry in the right hand side.
Our algorithmic framework to compute sparse solutions uses
(in the extreme case) only a
single row of the measurement matrix in each step. The
framework is flexible enough to include ``block processing'' of the
matrix and also allows for generalization to sparse solutions in
dictionaries (also known as $\ell^1$-analysis
minimization~\cite{VaPeDoFa13,H13}) and total variation
minimization~\cite{chambolle1997tvmin}. Our work can be seen as a combination of the
Kaczmarz-approach~\cite{K37} (also known as ART~\cite{GBH70}) and the
linearized Bregman method~\cite{OMDY10} and we follow the framework laid
out in~\cite{lorenz2013linbreg}. Note that a different and heuristic
sparse Kaczmarz solver have been proposed in~\cite{MY13} and a methodology for online compressed sensing based on homotopy was proposed in~\cite{garrigues2008homotopy,SR10}.

\section{The sparse Kaczmarz solver and the linearized Bregman method}
\label{sec:sparse-kaczmarz-solver}

\subsection{The Kaczmarz solver}
\label{sec:kaczmarz}

We consider a matrix $A\in\RR^{m\times n}$ with $n>m$ with rows
$a_k^T$ ($k=1,\dots m$), a vector $b\in\RR^m$ and aim to find
solutions to the underdetermined system of equations $Ax=b$. A
classical iterative method that only uses a single row $a_k^T$ in each
step is Kaczmarz's method~\cite{K37}. It consists of iterative projections onto
the hyperplanes $H_k = \{x\ :\ a_k^T x = b_k\}$,
i.e. in iteration $k$ we choose an index $r(k)$ and perform $x^{k+1} =
P_{H_{r(k)}}(x^k) = x^k - \frac{a_{r(k)}^Tx^k -
  b_{r(k)}}{\norm{a_k}^2}a_{r(k)}$. The \emph{control sequence} could be
simply cyclic ($r(k) = (k \mod m) + 1$) but also randomized sequences
are of use (cf.~\cite{strohmer2009randomized} where a convergence rate is
proven when one samples the rows with probability equal to their
squared norm). Convergence can be ensured, if the control sequence
picks up any index infinitely often~\cite{BB97} and we call these
control sequences \emph{admissible}. The Kaczmarz method converges for
any system $Ax=b$ that has a solution and moreover, it converges
towards the least squares solution.
Interestingly, a very small change in the algorithm makes it converge
to a sparse solution:
\begin{thm}[Convergence of the sparse Kaczmarz method]
  Assume that $Ax=b$ has a
  solution, let $\lambda>0$, $r$ be an admissible control sequence and
  denote by
  $
  S_\lambda(x) = \max(\abs{x}-\lambda,0)\sign(x)
  $
  the soft shrinkage function (applied component-wise to a
  vector). Then for $z^0=x^0=0$ the iteration
  \begin{equation}\label{eq:sparse_kaczmarz}
    \begin{split}
      z^{k+1} & = z^k - \frac{a_{r(k)}^Tx^k -
        b_{r(k)}}{\norm{a_{r(k)}}^2}a_{r(k)}\\
      x^{k+1} & = S_\lambda(z^{k+1})
    \end{split}
  \end{equation}
  converges to a solution of
  \begin{equation}
    \label{eq:sparse_solution}
    \min_x \lambda\norm{x}_1 + \tfrac12\norm{x}^2,\quad\text{s.t.}\quad Ax=b.
  \end{equation}
\end{thm}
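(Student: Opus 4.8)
The plan is to read the iteration~\eqref{eq:sparse_kaczmarz} as a Bregman projection method for the strongly convex function $f(x)=\lambda\norm{x}_1+\tfrac12\norm{x}^2$ and to run a Fej\'er-type monotonicity argument in the associated Bregman distance, along the lines of~\cite{lorenz2013linbreg}. First I would collect the convex-analytic facts about $f$: it is $1$-strongly convex; since $S_\lambda$ is the proximal map of $\lambda\norm{\cdot}_1$ we have $S_\lambda=(\mathrm{Id}+\lambda\,\partial\norm{\cdot}_1)^{-1}=(\partial f)^{-1}=\nabla f^*$, so $f^*$ is differentiable with $\nabla f^*=S_\lambda$; and $S_\lambda$ is firmly nonexpansive, in particular $1$-Lipschitz. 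Writing $z^{k+1}=z^k+t_ka_{r(k)}$ with $t_k=(b_{r(k)}-a_{r(k)}^Tx^k)/\norm{a_{r(k)}}^2$ and $x^{k+1}=S_\lambda(z^{k+1})=\nabla f^*(z^{k+1})$, this identifies $z^k\in\partial f(x^k)$ for every $k$ (the base case uses $z^0=x^0=0$ and $0\in\partial f(0)$), and $z^k$ stays in the row space of $A$. Hence the Bregman distance $D_f^z(x,y)=f(x)-f(y)-\langle z,x-y\rangle$ (with $z\in\partial f(y)$) is well defined along the iteration, and $1$-strong convexity gives $D_f^{z^k}(x,x^k)\ge\tfrac12\norm{x-x^k}^2$ and $D_f^{z^k}(x^{k+1},x^k)\ge\tfrac12\norm{x^{k+1}-x^k}^2$.

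The heart of the matter is the descent inequality: for every solution $\hat x$ of $Ax=b$,
\[
  D_f^{z^{k+1}}(\hat x,x^{k+1})\;\le\;D_f^{z^k}(\hat x,x^k)-\tfrac12\,\frac{(a_{r(k)}^Tx^k-b_{r(k)})^2}{\norm{a_{r(k)}}^2}.
\]
I would obtain it by expanding the difference of the two Bregman distances and using $z^{k+1}-z^k=t_ka_{r(k)}$ together with $a_{r(k)}^T\hat x=b_{r(k)}$ to reach the identity $D_f^{z^{k+1}}(\hat x,x^{k+1})-D_f^{z^k}(\hat x,x^k)=-D_f^{z^k}(x^{k+1},x^k)-\beta_k^2/\norm{a_{r(k)}}^2+\langle z^{k+1}-z^k,x^{k+1}-x^k\rangle$ with $\beta_k=b_{r(k)}-a_{r(k)}^Tx^k$; then, bounding the inner product by $\norm{z^{k+1}-z^k}\,\norm{x^{k+1}-x^k}$ and using $\norm{x^{k+1}-x^k}\le\norm{z^{k+1}-z^k}=\abs{\beta_k}/\norm{a_{r(k)}}$ together with $D_f^{z^k}(x^{k+1},x^k)\ge\tfrac12\norm{x^{k+1}-x^k}^2$, a completion of the square yields the claim.

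The usual consequences follow immediately: $D_f^{z^k}(\hat x,x^k)$ is nonincreasing and $\ge0$, hence convergent; summing the inequality gives $\sum_k(a_{r(k)}^Tx^k-b_{r(k)})^2/\norm{a_{r(k)}}^2<\infty$, so the residuals, and with them $t_k$, $\norm{z^{k+1}-z^k}$ and $\norm{x^{k+1}-x^k}$, tend to $0$; and $\tfrac12\norm{x^k-\hat x}^2\le D_f^{z^k}(\hat x,x^k)$ shows $\{x^k\}$ is bounded. Since $z^k\in\partial f(x^k)=x^k+\lambda\,\partial\norm{x^k}_1\subseteq x^k+\lambda[-1,1]^n$, the sequence $\{z^k\}$ is bounded as well. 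Along a subsequence with $(x^{k_j},z^{k_j})\to(x^*,z^*)$, closedness of the graph of $\partial f$ gives $z^*\in\partial f(x^*)$, i.e.\ $x^*=S_\lambda(z^*)$, and closedness of the row space gives $z^*=A^Ty^*$ for some $y^*$. Thus $x^*$ already satisfies the optimality conditions $A^Ty^*\in\partial f(x^*)$ of problem~\eqref{eq:sparse_solution} \emph{provided} it is feasible, $Ax^*=b$.

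Feasibility of a limit point is the step I expect to be the main obstacle. For each row index $i$, admissibility makes $i$ recur infinitely often, and along those times the residual $a_i^Tx^k-b_i\to0$; combined with $\norm{x^{k+1}-x^k}\to0$ one wants to conclude that some limit point $x^*$ of $\{x^k\}$ lies in every hyperplane $H_i$, hence $Ax^*=b$. For cyclic control this is immediate (the gap between consecutive occurrences of $i$ is at most $m$), but for a merely admissible control sequence this is the delicate argument of Bauschke--Borwein~\cite{BB97}, also used in~\cite{lorenz2013linbreg}. Once one feasible limit point $x^*$ is in hand, the descent inequality applied with $\hat x=x^*$ shows $D_f^{z^k}(x^*,x^k)$ is nonincreasing, while along the subsequence $D_f^{z^{k_j}}(x^*,x^{k_j})=f(x^*)-f(x^{k_j})-\langle z^{k_j},x^*-x^{k_j}\rangle\to0$ by continuity of $f$; hence $D_f^{z^k}(x^*,x^k)\to0$ and, by $1$-strong convexity, the whole sequence satisfies $x^k\to x^*$. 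As $x^*$ obeys the KKT conditions and~\eqref{eq:sparse_solution} has a unique minimizer by strict convexity, $x^*$ is that minimizer, which is the assertion.
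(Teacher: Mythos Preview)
Your proposal is correct and is essentially the argument the paper invokes by citation: the paper's own proof consists of a single paragraph identifying the iteration as an instance of the BPSFP method of~\cite{lorenz2013linbreg} (Bregman projections onto the hyperplanes $H_k$ with respect to $f(x)=\lambda\norm{x}_1+\tfrac12\norm{x}^2$, with the inexact stepsize of~\cite[Theorem~2.8]{lorenz2013linbreg}) and then appeals to that theorem. What you have written is precisely the content of that cited result specialized to the present setting---the identification $\nabla f^*=S_\lambda$, the Bregman--Fej\'er descent inequality, the summability of residuals, the extraction of a feasible limit point via the admissibility argument of~\cite{BB97}, and the upgrade to full convergence by monotonicity of $D_f^{z^k}(x^*,x^k)$. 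So you and the paper take the same route; you simply unpack what the paper outsources to~\cite{lorenz2013linbreg}, and you correctly flag the feasibility step for general admissible controls as the one place where a nontrivial external argument is needed.
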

\begin{proof}
  The theorem follows from the theory developed
  in~\cite{lorenz2013linbreg} as the sparse Kaczmarz method is a
  special instance of the BPSFP (Bregman projections for split
  feasibility problems): Consider the problem as a feasibility problem
  ``find $x\in \bigcap_{k=1}^m H_k$'' (with the hyperplanes $H_k$
  defined above) and define $f(x) = \lambda\norm{x}_1 +
  \tfrac12\norm{x}^2$. Then the sparse Kaczmarz method is a special
  instance of BPSFP with Bregman projections w.r.t. $f$ and inexact
  stepsizes according to~\cite[Theorem 2.8]{lorenz2013linbreg}.
\end{proof}

\begin{rem}
  \label{rem:exact_step}
  The sparse Kaczmarz method converges much faster if an \emph{exact
    stepsize}, as described in~\cite[Section 2.5.2]{lorenz2013linbreg}, is used. There one updates $z$ as $z^{k+1} = z^k-t_k
  a_{r(k)}$ with exact stepsize $t_k$ given as the solution of
  the problem
  \[
  t_k = \argmin_{t\in\RR} f^*(z^k - t\, a_{r(k)}) + t b_{r(k)}.
  \]
  Here $f^*$ is the convex dual of $f$, i.e. $f^*(z) =
  \tfrac12 \norm{S_\lambda(z)}^2$, in other words, $t_k$ is a solution of the
  piecewise linear equation
  \[
  a_{r(k)}^T\,S_\lambda(z^k - t\, a_{r(k)}) = b_{r(k)}.
  \]
\end{rem}

\subsection{From sparse block-Kaczmarz to the linearized Bregman method}
\label{sec:block-linbreg}

The flexibility of the BPSFP-framework allows to devise a blockwise
method as follows: Split the matrix $A\in\RR^{m\times n}$ into $L$
blocks $A_l\in\RR^{m_l\times n}$ (with $m_1 + \dots + m_L = m$) and
set the corresponding part of the right hand side $b\in\RR^m$ to
$b_l\in\RR^{m_l}$. Then each equation $A_l x = b_l$ has an affine
linear space $L_l$ of solutions and the solution space of $Ax=b$ is
$\bigcap_{l=1}^L L_l$.  The basic block-Kaczmarz method (with control
sequence $r$) then reads as
\[
x^{k+1} = x^k - A_{r(k)}^\dag (A_{r(k)} x^k - b_{r(k)})
\]
(i.e. the current iterate is orthogonally projected onto $L_{r(k)}$ by the
pseudo-inverse of $A_{r(k)}$). A similar treatment
would be possible for the sparse Kaczmarz method, but the
corresponding Bregman projection onto $L_{r(k)}$ is not straight
forward. A simpler method is based on the following idea: For each
block compute  a hyperplane that separates the current iterate
from $L_{r(k)}$ and then calculate the Bregman projection onto this
hyperplane. The iteration is
\begin{equation}\label{eq:iterations}
  \begin{split}
    t_k  & = \frac{\norm{A_{r(k)}x^k - b_{r(k)}}^2}{\norm{A_{r(k)}^T(A_{r(k))}x^k - b_{r(k)})}^2}\\
    z^{k+1} & = z^k - t_k A_{r(k)}^T(A_{r(k)}x^k - b_{r(k)})\\
    x^{k+1} & = S_\lambda(z^{k+1})
  \end{split}
\end{equation}
and converges to a solution of~(\ref{eq:sparse_solution})
(cf.~\cite[Corollary 2.9]{lorenz2013linbreg}).
\begin{rem}
  \label{rem:exact_step_block}
  The stepsize $t_k$ was called ``dynamic'' stepsize
  in~\cite{lorenz2013linbreg}. Another (usually worse) alternative is
  $t_k = \norm{A_{r(k)}}^{-2}$ but also an exact stepsize similar
  to Remark~\ref{rem:exact_step} is possible 
  (cf.~\cite[Algorithm 1]{lorenz2013linbreg}) and usually leads to
  faster convergence.
\end{rem}

In the extreme case of just one block ($L=1$), the resulting iteration
reads as
\begin{equation}\label{eq:linbreg}
  \begin{split}
    z^{k+1} & = z^k - t_k A^T(Ax^k - b_{r(k)})\\
    x^{k+1} & = S_\lambda(z^{k+1})
  \end{split}
\end{equation}
and is precisely the \emph{linearized Bregman method} from~\cite{OMDY10}
(up to the stepsize $t_k$ which is set constant $t= \norm{A}^{-2}$
there).

\section{Online Compressed Sensing}
\label{sec:online-cs}

As an illustration of the potential of the proposed method, we take
the following compressed sensing scenario: Assume that a sparse quantity
$x\in \RR^n$ can be measured by linear measurements, i.e. that one can
generate numbers $b_k = a_k^T x$ for some vectors $a_k\in\RR^n$, and
for simplicity we assume that the vectors $a_k$ contain Gaussian
randomly distributed entries. Moreover, assume that it is both costly
and time-consuming to take one measurement. In the classical
compressed sensing scenario one would estimate the sparsity of the
unknown solution (i.e. the number of nonzeros in $x$), then calculate
the number $m$ of measurements that is needed to guarantee that the
solution of $\min_x \norm{x}_1$, $a_k^T x = b_k$, $k=1,\dots, m$ is
the exact sparse solution and solve the optimization problem (or with
$\norm{x}_1$ replaced by $\lambda\norm{x}_1 + \tfrac12\norm{x}^2$
for some large enough $\lambda$, cf.~\cite{Yin10,schoepfer2012exact}). In our new
framework, we can start solving
\[
\min_x \lambda\norm{x}_1 + \tfrac12\norm{x}^2\quad\text{s.t.}\quad a_k^Tx = b_k,\ k=1,\dots,l
\]
as soon as the first $l$ measurements have been taken. There are at
least two different possibilities to do so:
\begin{enumerate}
\item \textbf{Increasing cycle sparse Kaczmarz:} Perform sweeps of the
  sparse Kaczmarz method~(\ref{eq:sparse_kaczmarz}) with $r(k) = (k \mod l) +1$.
\item \textbf{Increasing linearized Bregman method:} Perform
  linearized Bregman iterations~(\ref{eq:linbreg}) with $A^l =
  [a_1,\dots a_l]^T$ and $b^l = [b_1,\dots,b_l]^T$.
\end{enumerate}
To illustrate the performance of both methods we generated a vector $x\in\RR^n$ with $n=1500$ and 20 non-zero entries, initialized with a matrix $A$
consisting of a single row. Then we started the increasing cycle sparse
Kaczmarz and the increasing linearized Bregman method, added a row to
$A$ every 0.1~seconds and plotted the relative residual $\norm{A_lx^k
  - b_l}/\norm{b_l}$ and the relative reconstruction error $\norm{x^k
  - x^\dag}/\norm{x^\dag}$ against time in
Figure~\ref{fig:cs:res-error}. One clearly observes an interesting
phenomenon: The residual decreases as expected and in the beginning,
it shoots up to a large value as soon as a new line is added to the
linear system; but from some point on this is not true anymore and the
residual stays small and most interestingly: This happens precisely
when the reconstruction error drops down drastically, indicating that
the true sparse solution has been found. This effect can be exploited
to stop doing measurements much earlier than indicated by a
precomputed number $m$ by just observing the evolution of the residual.

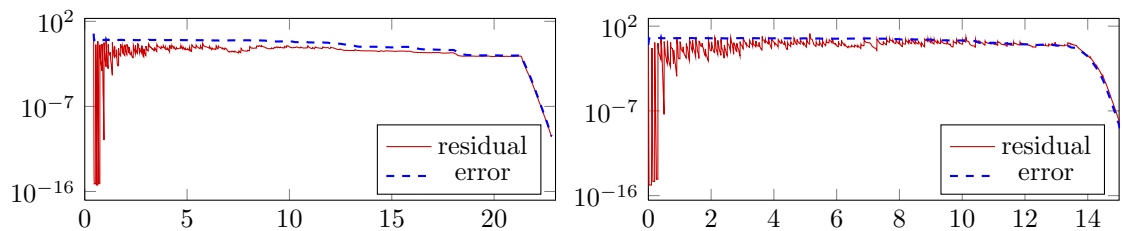
\begin{figure}[ht]
	\begin{tikzpicture}
		\begin{axis}[
				width=0.49\linewidth,
				height=0.25\linewidth,
				ymode=log,
				xmin=0,
				xmax=23,
				scaled ticks=false,
				legend entries={residual,error},
				legend pos = south east
			]
			\addplot[style=solid,red!80!black] table {data/cs_example/residualK.dat};
			\addplot[style=dashed,blue,thick] table {data/cs_example/errorxK.dat};
		\end{axis}
	\end{tikzpicture}\
	\begin{tikzpicture}
		\begin{axis}[
				width=.49\linewidth,
				height=0.25\linewidth,
				ymode=log,
				xmin=0,
				xmax=15,
				scaled ticks=false,
				legend entries={residual,error},
				legend pos = south east
			]
			\addplot[style=solid,red!80!black] table {data/cs_example/residualLB.dat};
			\addplot[style=dashed,blue,thick] table {data/cs_example/errorxLB.dat};
		\end{axis}
	\end{tikzpicture}
	\caption{Illustration of the online compressed sensing
          scenario. Top: increasing cycle sparse Kaczmarz method
          against the computation time (final number of measurements is $m=196$).
          Bottom: increasing linearized Bregman
          method (final number of measurement is $m=150$).}
	\label{fig:cs:res-error}
\end{figure}

\section{TV tomography}
\label{sec:comp-example}

To illustrate the flexibility of the BPSFP framework, we show a
possibility to handle total variation minimization problems in a
Kaczmarz style. The Kaczmarz method has proven to be an efficient tool in computerized tomography. It is
used to compute least squares solutions in the case of an
overdetermined system $Ax=b$,
i.e. minimizers of $\norm{Ax-b}^2$. To lower the dose of radiation
it is desirable to reduce the number of X-ray projections.
If the resulting linear system is
underdetermined, further regularization is needed to obtain meaningful
solutions. We consider a phantom of $128\times 128$ pixels and 3128 measurements (i.e. approximately five times  undersampling)\footnote{We use the AIRtools toolbox~\cite{HS12} to obtain the tomography matrix.}. total variation regularization, i.e. we are
interested in the solution of
\begin{equation}
  \label{eq:tv-tomo}
  \min_u \norm{|\nabla u|}_1\quad\text{s.t}\quad Au = b
\end{equation}
(where $|\nabla u|$ stands for the Euclidean norm of the gradient of
$u$, applied pointwise).  As such the problem does not fit directly
into our algorithmic framework and we propose to introduce an
auxiliary variable $p$, add a quadratic regularization and reformulate~\eqref{eq:tv-tomo} as
\begin{equation}
  \label{eq:tv-tomo-aux}
  \begin{split}
    \min_{u,p}\lambda\norm{|p|}_1 + \tfrac12\Big(\norm{u}^2 + \norm{p}^2\Big)\quad\text{s.t}\quad  Au &= b,\\
    \nabla u &= p.
  \end{split}
\end{equation}
Now we treat both constraints separately: The constraints $Au=b$ are
treated as single hyperplane constraints $a_k^T u = b_k$. This leads
to well known Kaczmarz steps for the $u$ variable: $u^{k+1} = u^k -
\frac{a_{r(k)}^T u^k - b_{r(k)}}{\norm{a_{r(k)}}^2}a_{r(k)}$. For the
constraint $B[u,p]^T = \nabla u - p =0$ we perform linearized Bregman
steps (as the application of the full operator $B= [\nabla\ -I]$ is
very cheap). Since the functional $f(p) = \lambda\norm{|p|}_1 +
\tfrac12\norm{p}^2$ does not allow for a simple calculation of exact
stepsizes (cf. Remark~\ref{rem:exact_step_block}), we use the dynamic
stepsize as in~\eqref{eq:iterations} and get the linearized Bregman
steps
\begin{equation}
  \label{eq:2}
  \begin{split}
    w^k & = B [u,p]^T = \nabla u^k - p^k,\qquad
    t_k  = \frac{\norm{w^k}^2}{\norm{B^Tw^k}^2}\\
    \begin{bmatrix}
      v^{k+1}\\ q^{k+1}
    \end{bmatrix}
    & = 
    \begin{bmatrix}
      v^{k}\\ q^{k}
    \end{bmatrix} - t_k B^T w^k,\\
    u^{k+1} & = v^{k+1}\\
    p^{k+1} & = S^2_\lambda(q^{k+1})
  \end{split}
\end{equation}
with the two-dimensional shrinkage function $S_\lambda^2:\RR^2 \to
\RR^2$, $S_\lambda^2(x) = \max(|x|-\lambda,0)\tfrac{x}{|x|}$.  The
proposed framework allows to perform steps for either constraint in an
arbitrary order. We show the results for the TV-Kaczmarz method where we perform either one linearized Bregman (LB) step per Kaczmarz sweep or 100 LB steps per sweep, cf.~Figures~\ref{fig:tomography} and~\ref{fig:phantom}.

\begin{figure}[ht]
	\begin{tikzpicture}
		\begin{axis}[
				width=0.48\linewidth,
				height=0.25\linewidth,
				ymode=log,
				xmin=0,
				xmax=500,
				scaled ticks=false,
				legend entries={1 LB step, 100 LB steps},
				legend pos = north east
			]
			\addplot[style=solid,red!80!black] table {data/tomography/res1_1.dat};
                        \addplot[thick,style=dotted,green!60!black] table {data/tomography/res1_100.dat};
		\end{axis}
	\end{tikzpicture}
	\begin{tikzpicture}
		\begin{axis}[
				width=0.48\linewidth,
				height=0.25\linewidth,
				ymode=log,
				xmin=0,
				xmax=500,
				scaled ticks=false,
				legend entries={1 LB step, 100 LB steps},
				legend pos = north east
			]
			\addplot[style=solid,red!80!black] table {data/tomography/res2_1.dat};
                        \addplot[thick,style=dotted,green!60!black] table {data/tomography/res2_100.dat};
                        
		\end{axis}
	\end{tikzpicture}
	\caption{Illustration for the TV-Kaczmarz solver. Top:
          Relative resiudal $\norm{Au^k - b}/\norm{b}$, bottom:
          Residual $\norm{\nabla u - p}$. Red solid: One linearized
          Bregman step per Kaczmarz sweep, green dotted: 100
          linearized Bregman steps per Kaczmarz sweep.}
	\label{fig:tomography}
\end{figure}
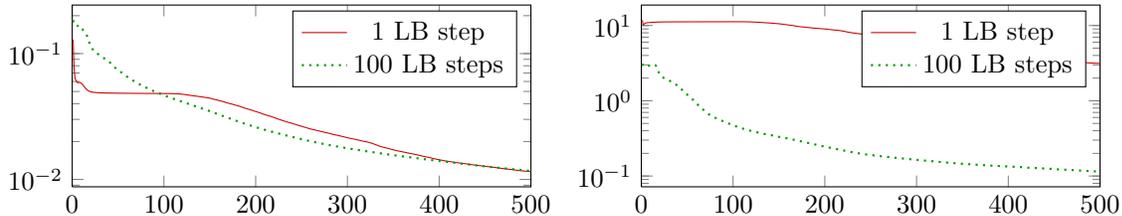

\begin{figure}[ht]
  \begin{center}
    \begin{subfigure}{0.25\linewidth}
      \includegraphics[width=\textwidth]{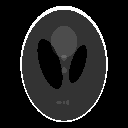}
      \caption{Phantom.\vphantom{Xg}}
      \label{fig:phantom-orig}
    \end{subfigure}\hspace{0.02\linewidth}%
    \begin{subfigure}{0.25\linewidth}
      \includegraphics[width=\textwidth]{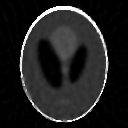}
      \caption{1 LB step per sweep.}
      \label{fig:phantom-rec1}
    \end{subfigure}\hspace{0.02\linewidth}%
    \begin{subfigure}{0.25\linewidth}
      \includegraphics[width=\textwidth]{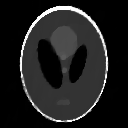}
      \caption{100 LB steps per sweep.}
      \label{fig:phantom-rec100}
    \end{subfigure}
  \end{center}
	\caption{TV reconstructions with the TV-Kaczmarz solver.}
        \label{fig:phantom}
\end{figure}

\section{Radio interferometry}
\label{sec:radio-inter}

In radio interferometry, multiple radio antennas record time-resolved amplitudes of radio emission
from a small region of the sky \cite{perley1989synthesis}.
The correlation between the amplitudes at any pair of antennas defines a sampling point
in the spatial Fourier representation of the image;
its position is determined by the vectorial distance between the antennas.
This sparse frequency sampling lends itself to reconstruction approches based on compressed sensing
\cite{candes2006compressive,wiaux2009compresseda,wiaux2009compressed,wenger2010compressed,wenger2010sparseri,wenger2013group}.

A single, ``snapshot'' measurement results in a characteristic Fourier domain sampling pattern, \ref{fig:ri-sampling}.
Since this pattern is very sparse, it often contains too little information for accurate reconstruction.
However, over the course of a day, the sampling pattern changes with the rotation of the Earth.
Thus, by combining the data from several measurements, much better coverage can be obtained, \ref{fig:ri-longsampling}.

\begin{figure}[ht]
  \begin{center}
    \begin{subfigure}{0.3\linewidth}
      \includegraphics[width=\textwidth]{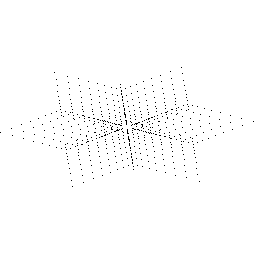}
      \caption{Single snapshot.}
      \label{fig:ri-sampling}
    \end{subfigure}\hspace{0.02\linewidth}%
    \begin{subfigure}{0.3\linewidth}
      \includegraphics[width=\textwidth]{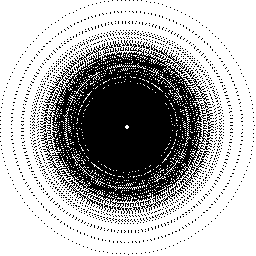}
      \caption{Half-day observation.}
      \label{fig:ri-longsampling}
    \end{subfigure}
  \end{center}
	\caption{Interferometric sampling patterns.}
\end{figure}

Since observation time is scarce and expensive,
it is desirable to only collect the minimum number of measurements necessary for reconstruction.
Using an online reconstruction algorithm, \ref{sec:online-cs},
it is possible to continuously monitor the reconstruction result while new data is added.
More importantly, the residual $\norm{A x^k - b}$
provides feedback about the amount of information in each incoming data block:
a new block that contains additional information instantly increases the residual because of the additional rows in $A$,
while a redundant block causes no significant change in the residual.
The absence of this increase in the residual value can serve as a stopping criterion for the measurement process.

\begin{figure}[ht]
  \begin{center}
    \begin{subfigure}{0.3\linewidth}
      \includegraphics[width=\textwidth]{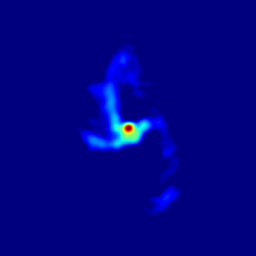}
      \caption{Sagittarius~A West.}
      \label{fig:ri-orig}
    \end{subfigure}\hspace{0.02\linewidth}%
    \begin{subfigure}{0.3\linewidth}
      \includegraphics[width=\textwidth]{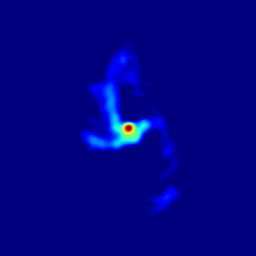}
      \caption{Reconstruction.}
      \label{fig:ri-rec}
    \end{subfigure}
  \end{center}
	\caption{Radio source and reconstruction from simulated data.}
\end{figure}

We illustrate this application using simulated measurements of the radio source Sagittarius~A West, \ref{fig:ri-orig},
in 7.5-minute intervals with the Very Large Array telescope.
The sampling pattern of each 7.5-minute observational data block resembles a rotated copy of \ref{fig:ri-sampling};
after a 12-hour observation, this results in the pattern shown in \ref{fig:ri-longsampling}.
Each data block $b_l$ is related to the true image by a corresponding sampled Fourier transform $A_l$.
While waiting for the next block, 300 iterations of \eqref{eq:iterations} are performed on the blocks accumulated so far.
Whenever a new data block is added, $r(k)$ starts with the newest block, followed by all others in a cyclic scheme.
$\lambda$ is set to $10^{-4} \norm{x}_1$, where $x$ is the ground truth image. 
Since negative intensities are not physically possible, we enforce the additional constraint $x \geq 0$
by truncating negative components of $z^{k+1}$ to zero before evaluating $S_\lambda(z^{k+1})$
(a step that amounts to a Bregman projection onto the non-negative orthant, cf.~\cite[Lemma 2.5]{lorenz2013linbreg}).

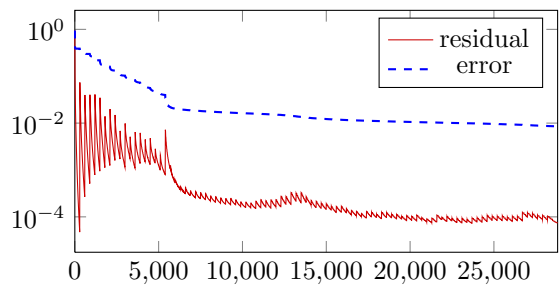
\begin{figure}[ht]
  \begin{center}
    \begin{tikzpicture}
      \begin{axis}[ width=0.5\linewidth, height=0.3\linewidth,
        ymode=log, xmin=0, xmax=28800, scaled ticks=false, legend
        entries={residual,error}, legend pos = north east ]
        \addplot[style=solid,red!80!black] table[x=step,y=residual]
        {data/ri/ri-sgra-10.dat}; \addplot[style=dashed,blue,thick]
        table[x=step,y=error] {data/ri/ri-sgra-10.dat};
      \end{axis}
    \end{tikzpicture}
  \end{center}
	\caption{Relative residuals $\norm{A x^k - b } / \norm{b}$ and errors
	of online interferometric reconstruction after each time step $k$.}
	\label{fig:ri-res}
\end{figure}

The residuals after each step are plotted in Figure~\ref{fig:ri-res}.
After 5\,400 iterations, when 18 blocks have been processed,
additional data blocks stop causing a significant increase in the residual,
and the relative reconstruction error abruptly drops from about 4\,\% to about 2\,\%.
In a practical setting, the measurement could now be stopped, based on an observation that relies only on the residual, freeing the telescope for other observations,
and the algorithm could perform additional iterations on the available data until convergence.
In our experiment, we successively add the remaining blocks,
and the algorithm converges to a final relative error of about 0.85\,\%, \ref{fig:ri-rec}. 

\bibliographystyle{IEEEbib}
\bibliography{refs}

\begin{thebibliography}{10}

\bibitem{candes2006compressive}
E.~J. Cand{\`e}s,
\newblock ``Compressive sampling,''
\newblock in {\em Proceedings of the International Congress of Mathematicians},
  2006, vol.~3, pp. 1433--1452.

\bibitem{FR13}
S.~Foucart and H.~Rauhut,
\newblock {\em A mathematical introduction to compressive sensing},
\newblock Birkh{\"a}user, Boston, 2013.

\bibitem{chen1998basispursuit}
S.~S. Chen, D.~L. Donoho, and M.~A. Saunders,
\newblock ``Atomic decomposition by basis pursuit,''
\newblock {\em SIAM Journal on Scientific Computing}, vol. 20, no. 1, pp.
  33--61, 1998.

\bibitem{lorenz2011ISAL1}
D.~A. Lorenz, M.~E. Pfetsch, and A.~M. Tillmann,
\newblock ``Solving basis pursuit: Subgradient algorithm, heuristic optimality
  check, and solver comparison,'' Optimization Online E-Print ID 2011-07-3100,
  2011.

\bibitem{VaPeDoFa13}
S.~Vaiter, G.~Peyre, C.~Dossal, and J.~Fadili,
\newblock ``Robust sparse analysis regularization,''
\newblock {\em IEEE Transactions on Information Theory}, vol. 59, no. 4, pp.
  2001--2016, 2013.

\bibitem{H13}
M.~Haltmeier,
\newblock ``Stable signal reconstruction via $\ell^1$-minimization in
  redundant, non-tight frames,''
\newblock {\em IEEE Trans. Signal Process.}, vol. 61, no. 2, pp. 420--426,
  2013.

\bibitem{chambolle1997tvmin}
A.~Chambolle and P.-L. Lions,
\newblock ``Image recovery via total variation minimization and related
  problems,''
\newblock {\em Numerische Mathematik}, vol. 76, pp. 167--188, 1997.

\bibitem{K37}
S.~Kaczmarz,
\newblock ``Angen{\"a}herte {Aufl{\"o}sung} von {Systemen} linearer
  {Gleichungen},''
\newblock {\em Bull. Internat. Acad. Polon. Sci. Lettres A}, pp. 355--357,
  1937.

\bibitem{GBH70}
R.~Gordon, R.~Bender, and G.~T. Herman,
\newblock ``Algebraic reconstruction techniques {(ART)} for three-dimensional
  electron microscopy and {X}-ray photography,''
\newblock {\em Journal of Theoretical Biology}, vol. 29, no. 3, pp. 471--481,
  1970.

\bibitem{OMDY10}
S.~Osher, Y.~Mao, B.~Dong, and W.~Yin,
\newblock ``Fast linearized {B}regman iteration for compressive sensing and
  sparse denoising,''
\newblock {\em Communications in Mathematical Sciences}, vol. 8, no. 1, pp.
  93--111, 2010.

\bibitem{lorenz2013linbreg}
D.~A. Lorenz, F.~Sch\"opfer, and S.~Wenger,
\newblock ``The linearized {B}regman method via split feasibility problems:
  Analysis and generalizations,'' Submitted for publication, September 2013,
\newblock http://arxiv.org/abs/1309.2094.

\bibitem{MY13}
H.~Mansour and O.~Yilmaz,
\newblock ``A fast randomized {K}aczmarz algorithm for sparse solutions of
  consistent linear systems,'' http://arxiv.org/abs/1305.3803, 2013.

\bibitem{garrigues2008homotopy}
P.~Garrigues and L.~E. Ghaoui,
\newblock ``An homotopy algorithm for the lasso with online observations,''
\newblock in {\em Advances in neural information processing systems}, 2008, pp.
  489--496.

\bibitem{SR10}
M.~Salman~Asif and J.~Romberg,
\newblock ``Dynamic updating for $\ell_{1}$ minimization,''
\newblock {\em Selected Topics in Signal Processing, IEEE Journal of}, vol. 4,
  no. 2, pp. 421--434, April 2010.

\bibitem{strohmer2009randomized}
T.~Strohmer and R.~Vershynin,
\newblock ``A randomized {K}aczmarz algorithm with exponential convergence,''
\newblock {\em Journal of Fourier Analysis and Applications}, vol. 15, no. 2,
  pp. 262--278, 2009.

\bibitem{BB97}
H.~H. Bauschke and J.~M. Borwein,
\newblock ``{L}egendre functions and the method of random {B}regman
  projections,''
\newblock {\em J. Convex Anal.}, vol. 4, no. 1, pp. 27--67, 1997.

\bibitem{Yin10}
W.~Yin,
\newblock ``Analysis and generalizations of the linearized {B}regman method,''
\newblock {\em SIAM J. Imaging Sci.}, vol. 3, no. 4, pp. 856--877, 2010.

\bibitem{schoepfer2012exact}
F.~Sch\"opfer,
\newblock ``Exact regularization of polyhedral norms,''
\newblock {\em SIAM Journal on Optimization}, vol. 22, no. 4, pp. 1206--1223,
  2012.

\bibitem{HS12}
P.~C. Hansen and M.~Saxild-Hansen,
\newblock ``{AIR} tools - {A} {MATLAB} package of algebraic iterative
  reconstruction methods,''
\newblock {\em Journal of Computational and Applied Mathematics}, vol. 236, pp.
  2167--2178, 2012.

\bibitem{perley1989synthesis}
R.~A. Perley, F.~R. Schwab, and A.~H. Bridle, Eds.,
\newblock {\em Synthesis Imaging in Radio Astronomy}, vol.~6 of {\em Conference
  Series},
\newblock Astronomical Society of the Pacific, 1989.

\bibitem{wiaux2009compresseda}
Y.~Wiaux, L.~Jacques, G.~Puy, A.~M.~M. Scaife, and P.~Vandergheynst,
\newblock ``Compressed sensing imaging techniques for radio interferometry,''
\newblock {\em Mon. Not. R. Astron. Soc.}, vol. 395, pp. 1733--1742, 2009.

\bibitem{wiaux2009compressed}
Y.~Wiaux, G.~Puy, and P.~Vandergheynst,
\newblock ``Compressed sensing reconstruction of a string signal from
  interferometric observations of the cosmic microwave background,''
\newblock {\em Mon. Not. R. Astron. Soc.}, vol. 402, no. 4, pp. 2626--2636,
  2010.

\bibitem{wenger2010compressed}
S.~Wenger, S.~Darabi, P.~Sen, K.-H. Glassmeier, and M.~Magnor,
\newblock ``Compressed sensing for aperture synthesis imaging,''
\newblock in {\em Proc. {IEEE} International Conference on Image Processing
  {(ICIP)} 2010}, Sept. 2010, pp. 1381--1384.

\bibitem{wenger2010sparseri}
S.~Wenger, M.~Magnor, Y.~Pihlstr{\"{o}}m, S.~Bhatnagar, and U.~Rau,
\newblock ``{SparseRI}: {A} compressed sensing framework for aperture synthesis
  imaging in radio astronomy,''
\newblock {\em Publ. Astron. Soc. Pac. {(PASP)}}, vol. 122, no. 897, pp.
  1367--1374, Oct. 2010.

\bibitem{wenger2013group}
S.~Wenger, U.~Rau, and M.~Magnor,
\newblock ``A group sparsity imaging algorithm for transient radio sources,''
\newblock {\em Astronomy and Computing}, vol. 1, pp. 40--45, Feb. 2013.

\end{thebibliography}
\bigskip

\noindent
Dirk Lorenz\\
\texttt{d.lorenz@tu-braunschweig.de}, Institute for Analysis and Algebra, TU Braunschweig, 38092 Braunschweig, Germany\\

\noindent
Stephan Wenger,  Marcus Magnor\\
\texttt{\{wenger,magnor\}@cg.cs.tu-bs.de},
Institut für Computergraphik, TU Braunschweig, 38092 Braunschweig, Germany\\

\noindent
Frank Sch\"opfer\\
\texttt{frank.schoepfer@uni-oldenburg.de}, Carl von Ossietzky Universit\"at Oldenburg, 26111 Oldenburg, Germany

\end{document}